\theoremstyle{plain}
\newtheorem{theorem}{Theorem}[section]
\newtheorem{corollary}[theorem]{Corollary}
\newtheorem{lemma}[theorem]{Lemma}
\theoremstyle{definition}
\newtheorem{example}{Example}
\title[Polyharmonic functions in cones]{Polyharmonic functions and random processes in cones}
\author{Fran\c cois Chapon}
\address{Universit\'e de Toulouse, Institut de Math\'ematiques de Toulouse, UMR CNRS 5219, UPS, F-31062 Toulouse Cedex 9, France}
\email{francois.chapon@math.univ-toulouse.fr}
\author{\' Eric Fusy}
\address{CNRS \and LIX, UMR CNRS 7161, \'Ecole  Polytechnique, 1 rue  Honor\'e  d'Estienne  d'Orves, 91120  Palaiseau, France}
\email{fusy@lix.polytechnique.fr}
\author{Kilian Raschel}
\address{CNRS \and Institut Denis Poisson, UMR CNRS 7013, Universit\'e de Tours et Universit\'e d'Orl\'eans, Parc de Grandmont, 37200 Tours, France}
\email{raschel@math.cnrs.fr}
\thanks{This project has received funding from the European Research Council (ERC) under the European Union's Horizon 2020 research and innovation programme under the Grant Agreement No.\ 759702.
F.C.\ is supported by the grant ANR-18-CE40-0006 MESA funded by the French National Research Agency (ANR). \'E.F.\ is supported by the grant ANR-16-CE40-0009-01 GATO funded by the French National Research Agency (ANR)}
\date{\today}
\begin{document}

\begin{abstract}
We investigate polyharmonic functions associated to Brownian motions and random walks in cones. These are functions which cancel some power of the usual Laplacian in the continuous setting and of the discrete Laplacian in the discrete setting. We show that polyharmonic functions naturally appear while considering asymptotic expansions of the heat kernel in the Brownian case and in lattice walk enumeration problems. We provide a method to construct general polyharmonic functions through Laplace transforms and generating functions in the continuous and discrete cases, respectively. This is done by using a functional equation approach.
\end{abstract}
\keywords{Brownian motion in cones; Heat kernel; Random walks in cones; Harmonic functions; Polyharmonic functions; Complete asymptotic expansions; Functional equations}
\renewcommand{\subjclassname}{%
  \textup{2010} Mathematics Subject Classification}
\subjclass[2010]{Primary 60G50, 60J65; Secondary 05A15, 30D05}


\maketitle

\section{Introduction and motivations}

In the continuous setting, polyharmonic functions are functions which cancel some power of the usual Laplacian. More precisely,  a function $v$ on some domain $K$ of $\mathbb R^d$ satisfying
\begin{equation*}
     \Delta^p v = 0
\end{equation*}
for some $p\geq1$, where $\Delta$ is the usual Laplacian in $\mathbb R^d$, is said to be \textit{polyharmonic} of order $p$, or \textit{polyharmonic} for short. So polyharmonic functions of order 1 are just harmonic functions. 
Obviously, a polyharmonic function $v_p$ of order $p$ satisfies $\Delta v_p=v_{p-1}$, where $v_{p-1}$ is polyharmonic of order $p-1$. For example, polynomials are polyharmonic.
Harmonic functions have been tremendously investigated and pioneer works on polyharmonic functions go back to the work of Almansi \cite{Al-1899}. One can consult for instance  the monograph \cite{ArNaCr-83} for an introduction to this topic. 

In particular,  Almansi \cite{Al-1899} proved that if the domain $K$ is star-like with respect to the origin, then every polyharmonic function of order $p$ admits a unique decomposition
\begin{equation}
\label{eq:Almansi}
     f(x)=\sum_{k=0}^{p-1} \vert x\vert^{2k} h_k(x) ,
\end{equation}
where each $h_k$ is harmonic on $K$ and $\vert x\vert$ is the Euclidean length of $x$, hence completely characterising continuous polyharmonic functions on such domains. 

In comparison with the continuous case, much less is known in the discrete setting, where the Laplacian has to be replaced by a discrete difference operator. Some progress in understanding discrete polyharmonic functions has been made in the last two decades. For instance, one may cite \cite{CoCoGoSi-02}, where the authors investigated polyharmonic functions for the Laplacian on trees, and proved a similar result as Almansi's theorem \eqref{eq:Almansi} for homogeneous trees.  Recent works of Woess and co-authors \cite{HiWo-19, SaWo-19}  are  generalising this previous work. 

Our original motivation to study discrete polyharmonic functions comes from   the following framework. Consider a walk in $\mathbb Z^d$ with step set $\mathcal S$ confined in some cone $K\subset\mathbb Z^d$.  Denote by 
$q(x,y;n)$
the number of $n$-length excursions between $x$ and $y$ staying in the cone $K$. 
To simplify, we only consider the case where $y$ is the origin, but all considerations below can be generalised to $y\not = 0$. 
In various cases \cite{DeWa-15}, the asymptotics of $q(x,0;n)$ as $n\to\infty$ is known to admit the form
\begin{equation}
\label{eq:asym_equiv}
     q(x,0;n) \sim v_0(x) \gamma^n n^{-\alpha_0},
\end{equation}
where $v_0(x)>0$ is a function depending only on $x$, $\gamma\in(0,\vert\mathcal S\vert]$ is the exponential growth, and $\alpha_0$ is the critical exponent.
It is easy to see that the function $v_0(x)$ in \eqref{eq:asym_equiv} defines a discrete harmonic function. Indeed, plugging \eqref{eq:asym_equiv} into the obvious recursive relation
\begin{equation}
\label{eq:discrete_heat_equation}
     q(x,0;n+1) = \sum_{s\in\mathcal S} q(x+s,0;n)\mathbf{1}_{\{x+s\in K\}},
\end{equation}
dividing by $\gamma^{n+1} n^{-\alpha_0}$ and letting $n\to\infty$, we obtain 
\begin{equation}
\label{eq:C(x)_harmonic}
     v_0(x) = \frac{1}{\gamma} \sum_{s\in\mathcal S} v_0(x+s)\mathbf{1}_{\{x+s\in K\}},
\end{equation}
which proves that, with the assumption that $v_0(x)=0$ for $x\notin K$, $v_0(x)$ is discrete harmonic for the Laplacian operator
\begin{equation}
\label{eq:uniform_Laplacian}
     Lf(x)=\frac{1}{\gamma} \sum_{s\in\mathcal S} f(x+s)-f(x),
\end{equation}
that is, $Lv_0=0$.  Denisov and Wachtel \cite{DeWa-15} go further and show that
\begin{itemize}
     \item the exponential growth $\gamma$ is $\min_{\mathbb R_+^d} \sum_{(s_1,\ldots,s_d)\in\mathcal S}x_1^{s_1}\cdots x_d^{s_d}$, it does not depend on $K$;
     \item the critical exponent $\alpha_0$ equals $1+\sqrt{\lambda_1+(d/2-1)^2}$, where $d$ is the dimension and $\lambda_1$ is the principal Dirichlet eigenvalue on some spherical domain constructed from $K$.
\end{itemize}

As a leading example, consider the simple random walk in the quarter plane, with step set $\{\leftarrow, \uparrow, \rightarrow, \downarrow\}$. In this case, the number of excursions $q((i,j),0;n)$ is $0$ if $m=\frac{n-i-j}{2}$ is not a non-negative integer, and otherwise takes the value
\begin{equation}
\label{eq:SRW_excursion}
     q((i,j),0;n) =\frac{(i+1)(j+1)n!(n+2)!}{m!(m+i+j+2)!(m+i+1)!(m+j+1)!},
\end{equation}
see \cite{bou-02counting} and our Example \ref{ex:SRW}. The equivalence \eqref{eq:asym_equiv} is then
\begin{equation}
\label{eq:asymp_SRW}
     q((i,j),0;n) \sim \frac{4}{\pi} 4^n \frac{v_0(i,j)}{n^{3}},
\end{equation} where $v_0(i,j)=(i+1)(j+1)$ is the well-known unique (up to multiplicative constants)  harmonic function positive within the quarter plane with Dirichlet boundary conditions. Other examples of such asymptotics may be found for instance in \cite{BaFl-02,BMMi-10,CoMeMiRa-17}.

Our aim in this discrete setting is to study more precise estimates than \eqref{eq:asym_equiv}, by considering complete asymptotic expansions of the following form, as $n\to\infty$,
\begin{equation}
\label{eq:asym_full}
     q(x,0;n) \sim  \gamma^n \sum_{p\geq0} \frac{v_p(x)}{n^{\alpha_p}}.
\end{equation}
From such an asymptotic expansion and using similar ideas as in \eqref{eq:discrete_heat_equation}, \eqref{eq:C(x)_harmonic} and \eqref{eq:uniform_Laplacian}, it is rather easy to prove that the terms  $v_p$ are polyharmonic functions, in the sense that a power $L^kv_p$ of the Laplacian operator vanishes. We will provide examples of such asymptotic expansions (at least for the first terms)  and of  the set of exponents $\{\alpha_p\}_{p\geq0}$ appearing in~\eqref{eq:asym_full}.

On the other hand, the functional equation approach has proved to be fruitful when studying random walk problems. The reference book on this topic is the monograph \cite{FaIaMa-17} by Fayolle, Iasnogorodski and  Malyshev. This method has been used in \cite{Ra-14} to construct harmonic functions, both in the discrete and continuous settings. Basically, the method consists of drawing from the harmonicity condition a functional equation satisfied by the generating function   (in the discrete setting) or by the Laplace transform (in the continuous setting) of a harmonic function. Solving some boundary value problem for these quantities leads, via Cauchy or Laplace inversion, to the sought harmonic function. We will provide an implementation of this method to construct bi-harmonic functions, which can be generalised to polyharmonic functions. 

The main features of our results are as follows:
\begin{itemize}
     \item We shine a light on a new link between discrete polyharmonic functions and complete asymptotic expansions in the enumeration of walks. 
     \item Our approach provides tools to study complete asymptotics expansions as in \eqref{eq:asym_full}, but does not allow to prove their existence. On the other hand,
    the powerful approach of Denisov and Wachtel \cite{DeWa-15} seems restricted to the first term in the asymptotics \eqref{eq:asym_equiv}. Indeed, one of the main tools in \cite{DeWa-15} is a coupling result of random walks by Brownian motion, which only provides an approximation of polynomial order, see \cite[Lem.~17]{DeWa-15}.
     
     \item We introduce a new class of functional equations (see \eqref{functional-eq-BM-2} and \eqref{eq:functional_equation_2-harmo}), for which the method of Tutte's invariants introduced in \cite{Tu-95,BeBM-11,BeBMRa-17} proves to be useful.
     \item In the unweighted planar case, it has been shown \cite{BoRaSa-14} that knowing the rationality of the exponent $\alpha_0$ in \eqref{eq:asym_full} was sufficient to decide the non-D-finiteness of the series of excursions. However, for walks with big steps in dimension two or walk models in dimension three, this information is not enough \cite{BoBMMe-18}. As a potential application of our results, we might use arithmetic information on the other exponents $\alpha_p$ to study the algebraic nature, for example the transcendance, of the associated combinatorial series.
\end{itemize}


This paper is organised as follows. We choose to start with the continuous setting since computations are more enlightening and accessible. In Section~\ref{section-BM}, we prove that polyharmonic functions naturally arise when performing an asymptotic expansion of the Dirichlet heat kernel in a cone. We next present the functional equation method to construct polyharmonic functions. 
Our main result here is Theorem~\ref{thm:Laplace-2}, where a class of solutions for the Laplace transform of a bi-harmonic function  is provided. It shows that the Laplace transform of a bi-harmonic function can be expressed in terms of the Laplace transform of the related harmonic function plus some additional terms. This can be thought of as a Laplace transform version of Almansi's theorem \eqref{eq:Almansi}.
In Section~\ref{section-discrete}, we exhibit the same phenomenon in the random walk setting. Discrete polyharmonic functions appear when considering the asymptotic expansion of coefficients counting walks with fixed endpoints in a domain, and the functional equation approach may be used to construct discrete polyharmonic functions.

These notes are the starting point of a long-term research project on discrete polyharmonic functions in cones. Notice that many ideas and techniques are not specific to cones and would work for many other domains of restriction $K$.

\medskip

{\bf Acknowledgements.}
We would like to thank C\'edric Lecouvey, Steve Melczer, Pierre Tarrago and Wolfgang Woess for very interesting discussions. This project has started in July 2019, when two authors were invited at the Institute of Mathematical Statistics of M\"unster University. The institute, and in particular Gerold Alsmeyer, is greatly acknowledged for hospitality. The first author also acknowledges the Institut Denis Poisson for the warm hospitality during his stay at the Universit\'e de Tours, where part of this work has been pursued. Finally, we thank the three anonymous referees for useful comments.

\section{Classical polyharmonic functions and heat kernel expansions}
\label{section-BM}

As pointed out in \cite[Chap.~VI]{ArNaCr-83}, the connection between the heat kernel and polyharmonic functions is very profound. Here, we deepen this connection by proving an exact asymptotic expansion for the heat kernel in terms of polyharmonic functions. We then implement the functional equation method to construct polyharmonic functions.

\subsection{Exact asymptotic expansion for the Brownian semigroup in a cone}
\label{section:asympt-BM}

Let $K$ be some cone in $\mathbb R^d$ and consider the Brownian motion $(B_t)_{t\geq0}$ killed at the boundary of $K$. Denote by $p(x,y;t)$ its  transition density, that is the density probability function of the transition probability kernel
\begin{equation*}
     \mathbb P_x ( B_t \in dy,  \tau >t  ),
\end{equation*}
where $\tau$ is the first exit time of $K$. Recall the  well-known fact that $p(x,y;t)$ corresponds to the heat kernel, i.e., the fundamental solution of the heat equation on $K$ with Dirichlet boundary condition, see for instance \cite{BaSm-97}.
Here, we  prove that the heat kernel  admits a complete asymptotic expansion in terms of polyharmonic functions for the Laplacian. 

Denote by $\Delta$ the usual Laplacian on $\mathbb R^d$. In polar coordinates $(r,\theta)$, where $r$ is the radial part and $\theta$ the angular part, it writes:
\begin{equation}
\label{eq:Laplacian_B}
     \Delta = \frac{\partial^2}{\partial r^2} + \frac{d-1}{r}\frac{\partial}{\partial r} +\frac{1}{r^2}\Delta_{\mathbb S^{d-1}},
\end{equation}
where $\Delta_{\mathbb S^{d-1}}$ denotes the spherical Laplacian. Let respectively $m_j$ and $\lambda_j$ be the Dirichlet (normalised) eigenfunctions and eigenvalues for the spherical Laplacian on the generating set $K\cap\mathbb S^{d-1}$, that is,
\begin{equation}
\label{eq:eigenfunctions}
     \left\{\begin{array}{rcll}
     \Delta_{\mathbb S^{d-1}}m_j&=&-\lambda_j m_j & \text{in } K\cap\mathbb S^{d-1},\\
     m_j&=&0 & \text{in } \partial (K\cap\mathbb S^{d-1}).
     \end{array}\right.
\end{equation}
The eigenvalues satisfy $0<\lambda_1<\lambda_2\leq \lambda_3\leq \ldots$ by \cite[Chap.~VII]{Ch-84}.
We introduce, for $j\geq1$,
\begin{equation}
\label{eq:bj_betaj}
     \beta_j=\sqrt{\lambda_j+(d/2-1)^2}\quad \text{and}\quad b_j=1-d/2+\sqrt{\lambda_j+(d/2-1)^2}.
\end{equation}
Lemma~1 in \cite{BaSm-97} gives an explicit expression for the transition density $p(x,y;t)$ of the  Brownian motion in $K$. It states that, for $x,y\in \mathbb R^d$ and $t\in \mathbb R_+$, 
\begin{equation}
\label{eq:heat_kernel}
     p(x,y;t)=\frac{\exp\left(-\frac{\rho^2+r^2}{2t}\right)}{t(\rho r)^{\frac{d}{2}-1}}\sum_{j=1}^\infty I_{\beta_j}\left(\frac{\rho r}{t}\right) m_j(\theta)m_j(\eta),
\end{equation}
where in polar coordinates $x=(\rho, \theta)$ and $y=(r,\eta)$. Here,  $I_\beta$ is the modified Bessel function of the first kind of order $\beta$, satisfying the differential equation
$
     I''_\beta(z)+\frac{1}{z}I'_\beta(z)=(1+\frac{\beta^2}{z^2})I_\beta(z)
$
and admitting the series expansion
\begin{equation}
\label{eq:modified_Bessel_expansion}
     I_\beta(z) = \sum_{m=0}^{\infty}\frac{1}{m!\Gamma(m+\beta+1)}\left(\frac{z}{2}\right)^{2m+\beta}.
\end{equation}
The following easy lemma will allow us to define certain polyharmonic functions.
\begin{lemma}
\label{lem:deg-2}
For any $\mu\geq0$ and $j\geq 1$, let $f_{\mu,j}$ be defined in spherical coordinates by
\begin{equation}
\label{not:f_mu_j}
     f_{\mu,j}(r,\theta)=r^{\mu}m_j(\theta). 
\end{equation}
Then $f_{\mu,j}$ satisfies
\begin{equation}
\label{eq:f_mu_j}
     \Delta f_{\mu,j} =(\mu^2 +(d-2)\mu-\lambda_j) f_{\mu-2,j}.
\end{equation}
\end{lemma}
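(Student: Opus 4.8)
The plan is to prove this by direct substitution into the polar-coordinate expression \eqref{eq:Laplacian_B} for the Laplacian, exploiting the fact that $f_{\mu,j}(r,\theta)=r^\mu m_j(\theta)$ separates cleanly into a radial factor and an angular factor. Since the three summands in $\Delta$ act on disjoint variables (the first two only differentiate in $r$, the last only in $\theta$), each term can be handled independently and the results simply added.

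First I would apply the radial part. Differentiating $r^\mu$ twice gives $\frac{\partial^2}{\partial r^2}f_{\mu,j}=\mu(\mu-1)r^{\mu-2}m_j(\theta)$, and the first-order term contributes $\frac{d-1}{r}\frac{\partial}{\partial r}f_{\mu,j}=(d-1)\mu\, r^{\mu-2}m_j(\theta)$. For the angular part, the key input is the eigenfunction equation \eqref{eq:eigenfunctions}: since $\Delta_{\mathbb S^{d-1}}m_j=-\lambda_j m_j$, the radial factor passes through the spherical Laplacian unchanged and one gets $\frac{1}{r^2}\Delta_{\mathbb S^{d-1}}f_{\mu,j}=-\lambda_j\, r^{\mu-2}m_j(\theta)$. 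This is the only step that uses anything beyond elementary calculus, and it is precisely what produces the $-\lambda_j$ in the stated coefficient.

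Summing the three contributions, every term carries the common factor $r^{\mu-2}m_j(\theta)=f_{\mu-2,j}(r,\theta)$, so I would collect it to obtain
\begin{equation*}
     \Delta f_{\mu,j}=\bigl[\mu(\mu-1)+(d-1)\mu-\lambda_j\bigr]f_{\mu-2,j}.
\end{equation*}
Expanding the bracket gives $\mu^2-\mu+(d-1)\mu-\lambda_j=\mu^2+(d-2)\mu-\lambda_j$, which is exactly \eqref{eq:f_mu_j}. There is no genuine obstacle here: the computation is routine, and the only point requiring care is to substitute the eigenvalue relation for $m_j$ correctly and to verify the arithmetic simplification $\mu(\mu-1)+(d-1)\mu=\mu^2+(d-2)\mu$ that merges the two radial contributions with the constant shift in the exponent.
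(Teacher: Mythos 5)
Your computation is correct and is exactly the argument the paper intends: the paper's proof simply says it is elementary using \eqref{eq:Laplacian_B} and \eqref{eq:eigenfunctions}, which is precisely the direct substitution and eigenvalue-relation step you carry out. The arithmetic $\mu(\mu-1)+(d-1)\mu=\mu^2+(d-2)\mu$ checks out, so nothing is missing.
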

\begin{proof}
The proof is elementary using \eqref{eq:Laplacian_B} and \eqref{eq:eigenfunctions}.
\end{proof}
\begin{corollary}
\label{cor:poly_continuous}
For any $k\in\mathbb N$, the function $f_{b_j+2k,j}$ defined in~\eqref{not:f_mu_j} is $k$-polyharmonic.
\end{corollary}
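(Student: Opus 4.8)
The plan is to iterate Lemma~\ref{lem:deg-2}. Write $P(\mu)=\mu^2+(d-2)\mu-\lambda_j$, so that \eqref{eq:f_mu_j} reads $\Delta f_{\mu,j}=P(\mu)f_{\mu-2,j}$. The first thing I would check is that $b_j$ is a root of $P$. Indeed, using $\beta_j=\sqrt{\lambda_j+(d/2-1)^2}$ and $b_j=1-d/2+\beta_j$ from \eqref{eq:bj_betaj}, a direct substitution (expand $(1-d/2+\beta_j)^2$ and use $d-2=-2(1-d/2)$) collapses $P(b_j)$ to $\beta_j^2-(d/2-1)^2-\lambda_j=0$. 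Hence $\Delta f_{b_j,j}=0$, i.e.\ $f_{b_j,j}$ is harmonic; this settles the case $k=0$.

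Next I would run the iteration. Applying $\Delta$ once to $f_{b_j+2k,j}$ produces $P(b_j+2k)\,f_{b_j+2(k-1),j}$, a constant multiple of $f_{b_j+2(k-1),j}$. Repeating this $k$ times and collecting the constants yields
\begin{equation*}
     \Delta^k f_{b_j+2k,j}=\Bigl(\prod_{i=1}^{k}P(b_j+2i)\Bigr)\,f_{b_j,j},
\end{equation*}
a scalar multiple of the harmonic function $f_{b_j,j}$. A short induction on $k$ makes this precise, the inductive step being a single invocation of Lemma~\ref{lem:deg-2}. Applying $\Delta$ one further time and using $\Delta f_{b_j,j}=0$ gives $\Delta^{k+1}f_{b_j+2k,j}=0$, which is exactly the asserted $k$-polyharmonicity (polyharmonicity of order $k+1$).

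I do not expect any genuine obstacle: the statement is a direct corollary of Lemma~\ref{lem:deg-2}, and the only step carrying real content is the verification $P(b_j)=0$, which is precisely the algebraic reason the exponent $b_j$ (and its shifts $b_j+2k$) is singled out. If one wished to record that the order is exactly $k+1$ rather than smaller, one would observe that the product $\prod_{i=1}^{k}P(b_j+2i)$ is nonzero, since $b_j+2i$ for $i\geq 1$ avoids both roots $1-d/2\pm\beta_j$ of $P$; but this sharper statement is not needed here. It is also worth noting that $f_{b_j+2k,j}=r^{2k}f_{b_j,j}=\vert x\vert^{2k}f_{b_j,j}$, so the result is exactly an instance of the Almansi decomposition \eqref{eq:Almansi} with a single harmonic block $h_k=f_{b_j,j}$.
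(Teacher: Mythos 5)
Your proof is correct and follows exactly the paper's own route: check that $b_j$ is a root of $\mu^2+(d-2)\mu-\lambda_j$ so that $f_{b_j,j}$ is harmonic, then induct on Lemma~\ref{lem:deg-2}. Your parenthetical remark that the conclusion is really polyharmonicity of order $k+1$ (and your aside linking $f_{b_j+2k,j}=\vert x\vert^{2k}f_{b_j,j}$ to the Almansi decomposition) are accurate observations beyond what the paper records, but the argument itself is the same.
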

\begin{proof}
It is obvious that $\mu=b_j$ satisfies $\mu^2 +(d-2)\mu-\lambda_j=0$, see \eqref{eq:bj_betaj}, so that $f_{b_j,j}$ is harmonic by \eqref{eq:f_mu_j}. An induction based on \eqref{eq:f_mu_j} completes the proof. 
\end{proof}
Doing an expansion of the heat kernel \eqref{eq:heat_kernel} as $t\to\infty$ and using series expansions of the exponential function and of the Bessel function \eqref{eq:modified_Bessel_expansion}, one immediately obtains:
\begin{theorem}
The Dirichlet heat kernel $p(x,y;t)$ in $K$ admits the following expansion, as $t\to \infty$, where $f_{b_j+2k,j}$ is defined in~\eqref{not:f_mu_j}, and $b_j$ and $\beta_j$ in~\eqref{eq:bj_betaj}:
\begin{multline*}
     p(x,y;t)\sim \\
     \sum_{j\geq1}\sum_{k,m\geq0 }\sum_{n=0}^{k}\frac{1}{t^{1+\beta_j+k+2m}} \frac{(-1)^k\binom{k}{n}}{2^kk!m!\Gamma(m+\beta_j+1)}f_{b_j+2(m+n),j}(\rho,\theta)f_{b_j+2(m+k-n),j}(r,\eta).
\end{multline*}
\end{theorem}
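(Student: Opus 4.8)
The plan is to treat the statement as a purely computational unfolding of the Barlow--Smits formula \eqref{eq:heat_kernel}: I would expand each of the three scalar factors appearing there as a power series in $1/t$, multiply, and collect terms of a given order. First I would expand the Gaussian factor by writing
\[
     \exp\Bigl(-\tfrac{\rho^2+r^2}{2t}\Bigr)=\sum_{k\geq0}\frac{(-1)^k}{2^kk!\,t^k}(\rho^2+r^2)^k,
\]
and then applying the binomial theorem $(\rho^2+r^2)^k=\sum_{n=0}^k\binom{k}{n}\rho^{2n}r^{2(k-n)}$; this is precisely where the inner sum over $n$ and the coefficient $(-1)^k\binom{k}{n}/(2^kk!)$ in the claim originate. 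Next I would expand the Bessel factor $I_{\beta_j}(\rho r/t)$ directly from its series \eqref{eq:modified_Bessel_expansion}, which produces the index $m$, the factor $1/(m!\,\Gamma(m+\beta_j+1))$, and the monomial $(\rho r/(2t))^{2m+\beta_j}$.

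Multiplying the two series as formal power series in $1/t$, the exponents of $t$ add to $k+2m+\beta_j$, and after incorporating the overall $t^{-1}$ prefactor the total exponent is $1+\beta_j+k+2m$, matching the claim. The key algebraic point is the leftover prefactor $(\rho r)^{-(d/2-1)}$: using the defining relation $\beta_j-(d/2-1)=b_j$ that follows from \eqref{eq:bj_betaj}, the $\rho$-exponent collapses to $b_j+2(m+n)$ and the $r$-exponent to $b_j+2(m+k-n)$. Reinserting the angular factors $m_j(\theta)$ and $m_j(\eta)$ from \eqref{eq:heat_kernel} and recalling the notation \eqref{not:f_mu_j}, I would recognize
\[
     \rho^{b_j+2(m+n)}m_j(\theta)=f_{b_j+2(m+n),j}(\rho,\theta),\qquad r^{b_j+2(m+k-n)}m_j(\eta)=f_{b_j+2(m+k-n),j}(r,\eta),
\]
which assembles exactly the product of $f$'s in the statement. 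By Corollary~\ref{cor:poly_continuous} every such $f_{b_j+2\ell,j}$ is polyharmonic, so this identification is what realizes the heat kernel as a genuine expansion in polyharmonic functions.

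Since each of the three expansions is elementary, I expect no difficulty in the algebra itself; the genuine obstacle is analytic, namely justifying that the formal rearrangement actually produces an asymptotic expansion as $t\to\infty$. One has to control the interchange of the three sums over $j$, $k$ and $m$, and supply a tail estimate showing that, at each fixed order in $1/t$, the omitted terms are of strictly higher order, uniformly in the remaining variables. This in turn needs bounds on the Dirichlet eigenfunctions $m_j$ and on the growth of the eigenvalues $\lambda_j$ (hence of $\beta_j$), together with the standard decay of the Bessel tails, so that the eigenfunction series in \eqref{eq:heat_kernel} may be expanded term by term. The theorem as phrased (``$\sim$'', ``one immediately obtains'') treats the manipulation at the level of formal asymptotic series, so in the spirit of the statement I would present the algebraic identification above and relegate the convergence and uniformity bookkeeping to standard heat-kernel estimates in the cone $K$.
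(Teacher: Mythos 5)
Your proposal is correct and follows exactly the paper's (one-line) argument: expand the Gaussian factor via the exponential series and the binomial theorem, expand the Bessel factor via \eqref{eq:modified_Bessel_expansion}, collect the powers of $t$, and absorb the prefactor $(\rho r)^{-(d/2-1)}$ using $b_j=\beta_j-(d/2-1)$ so as to recognize the functions $f_{\mu,j}$ of \eqref{not:f_mu_j}. One small remark: carrying your computation out literally, the factor $\bigl(\rho r/(2t)\bigr)^{2m+\beta_j}$ contributes an additional $2^{-(2m+\beta_j)}$ to the coefficient which is absent from the displayed formula, so the stated coefficient seems to be off by that harmless factor --- and, as you rightly note, the analytic justification of the term-by-term expansion is left implicit in the paper as well.
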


As such, the above result shows that the transition density of the Brownian motion in $K$ admits, as $t\to\infty$, an asymptotic expansion in descending powers of $t$ and in terms of polyharmonic functions for the Laplacian (see Corollary \ref{cor:poly_continuous}). Moreover, the set of these exponents  is (with $\mathbb N=\{0,1,2,\ldots\}$)
\begin{equation}
\label{eq:set_exponents}
     \bigcup_{j=1}^\infty (\beta_j+1+\mathbb N).
\end{equation}
Note that, depending on the cone, there might be an overlap between the sets $\beta_j+1+\mathbb N$. For instance, in the quadrant in dimension $2$, one has $\beta_j=2j$ and the set in \eqref{eq:set_exponents} reduces to $\{3,4,5,\ldots\}$. On the other hand, in dimension $2$ in a cone of opening $\alpha$ such that $\pi/\alpha\notin\mathbb Q$, there is no overlap between the points in \eqref{eq:set_exponents}.

As a last remark, we note that the same phenomenon appears for the survival probability $\mathbb P_x(\tau>t)$. Indeed, thanks to its explicit expression given by  \cite[Thm~1]{BaSm-97} (in terms of  the confluent hypergeometric function), one can write down an asymptotic expansion of $\mathbb P_x(\tau>t)$ in descending powers of $t$ in terms of polyharmonic functions for the Laplacian. 


\subsection{The functional equation approach}

We apply here the functional equation approach  in order to construct polyharmonic functions for the $2$-dimensional killed Brownian motion in a convex cone. This approach has been previously introduced in \cite{Ra-14} to compute harmonic functions, and is an adaptation of the functional equation method of the random walk case. Our main result is Theorem~\ref{thm:Laplace-2}, which gives the general form of the Laplace transform of a bi-harmonic function. 

Consider the Brownian motion $B$ in the quarter plane $\mathbb R^2_+$ (compared to the last section, we use $(x,y)$ for the coordinates of a 2d point) with covariance matrix
\begin{equation*}
     \Sigma = \begin{pmatrix} \sigma_{11} & \sigma_{12} \\ \sigma_{12} & \sigma_{22} \end{pmatrix} , 
\end{equation*}
with $\sigma_{11},\sigma_{22}>0$ and $\det{\Sigma} = \sigma_{11}\sigma_{22}- \sigma_{12}^2  \geq0$. Its 
infinitesimal generator is the operator
\begin{equation*}
\mathcal G f = \frac12 \left( \sigma_{11} \frac{\partial^2 f}{ \partial x^2}   +  2 \sigma_{12} \frac{\partial^2 f}{ \partial x \partial y} +  \sigma_{22} \frac{\partial^2 f}{ \partial y^2} \right) .
\end{equation*}
Note that through some linear transformation $\phi$ (see \cite[Eq.~(5.1)]{Ra-14}), one obtains the Brownian motion with identity covariance matrix in the cone $\phi(\mathbb R_+^2)$.

The  \textit{kernel} associated to the Brownian motion  is defined as the quantity 
\begin{equation*}
     \gamma (x,y) = \frac12 ( \sigma_{11} x^2  + 2  \sigma_{12} xy +  \sigma_{22} y^2), 
\end{equation*}
for $(x,y) \in \mathbb C^2$. The Laplace transform of a function $f$, which in the continuous case is the analogous quantity of the notion of generating function, is defined as
\begin{equation*}
L(f) (x,y) = \iint_{[0,\infty)^2} f(u,v) e^{-(xu+yv)} dudv,
\end{equation*}
for $(x,y) \in \mathbb C^2$ with positive real parts.

Now, let $h$ be a harmonic function associated with the Brownian motion with covariance matrix $\Sigma$, that is, $h$  vanishes on the boundary axes of the quadrant and satisfies
 $    \mathcal G h = 0$.
The functional equation for $h$ 
takes the following form (see \cite[Eq.~(A.1)]{Ra-14}):
\begin{equation*}
     \gamma(x,y) L(h)(x,y) = \frac12 ( \sigma_{11} L_1 (h)(y) + \sigma_{22} L_2 (h) (x)   ) +L(\mathcal G h)(x,y),
\end{equation*}
where we have denoted
\begin{equation*}
\left\{\begin{array}{lclcl}
L_1(h)(y) &:=&\displaystyle L \left( \frac{\partial h}{\partial x} (0,\cdot) \right) (y) &=& \displaystyle\int_0^\infty \frac{\partial h}{\partial x} (0,v) e^{-yv} dv ,\medskip\\
L_2(h)(x) &:=& \displaystyle L \left( \frac{\partial h}{\partial y} (\cdot,0) \right) (x) &= &\displaystyle \int_0^\infty \frac{\partial h}{\partial y} (u,0) e^{-xu} du.
\end{array}\right.
\end{equation*}
Using the harmonicity condition $\mathcal G h =0$, the functional equation for $h$ rewrites as
\begin{equation}
\label{functional-eq-BM-1}
     \gamma(x,y) L(h)(x,y) = \frac12 ( \sigma_{11} L_1 (h)(y) + \sigma_{22} L_2 (h) (x)   ) .
\end{equation}
We recall below the key argument of the method of \cite{Ra-14} to solve the functional equation~\eqref{functional-eq-BM-1}, which leads to harmonic functions for the Brownian motion via Laplace inversion. We will subsequently  apply a related method to obtain polyharmonic functions. 

Consider the two  solutions of 
$
\gamma \left(x,Y(x)\right)=0
$,
which, since $\gamma$ is a homogeneous polynomial of degree two,  are explicitly given by
$
Y_\pm(x) = c_\pm x
$,
with 
\begin{equation}
\label{eq:c_pm}
     c_\pm = \frac{-\sigma_{12}  \pm i \sqrt{\det{\Sigma}}}{\sigma_{22}}, 
\end{equation}
so that $c_+ = \overline{c_-}$. We write $c_\pm = c e^{\pm i\theta} $,  with $c = \sqrt{\frac{\sigma_{11}}{\sigma_{22}}}$ and $\theta$ such that $\cos \theta =- \frac{\sigma_{12}}{\sqrt{\sigma_{11}\sigma_{22}}}$. 

Denote by $\mathcal G_Y$ the domain delimited by the curve $Y_+([0,\infty])\cup Y_-([0,\infty])=c_+[0,\infty]\cup c_-[0,\infty]$ and containing the positive axis $[0,\infty]$.
Plugging each of the solutions $c_\pm x$ into the functional equation \eqref{functional-eq-BM-1}, one  obtains a \textit{boundary value problem} for $L_1(h)$, which states that:
\begin{enumerate}
\item $L_1(h)$ is analytic on $\mathcal G_Y$,
\item $L_1(h)$ is continuous  on $\overline{\mathcal G_Y}\setminus \{0\}$,
\item For all $x\in (0,\infty]$, $L_1(h)$ satisfies the boundary equation
$
     L_1(h)(c_+ x )  =  L_1 (h) (c_- x)
$.
\end{enumerate}

In order to solve this problem, one introduces  the conformal mapping $\omega$ from $\mathcal G_Y$ onto $\mathbb C \setminus \mathbb R_-$ defined by
$
\omega ( x) = {x^{-\pi/\theta}}
$.
 One eventually obtains that a class of solutions is obtained by letting  $L_1(h)$ to be of the form
\begin{equation}
\label{eq:L1-P}
     L_1(h)(y)= P\left(\frac{1}{y^{\pi/\theta}}\right),
\end{equation}
for any given polynomial $P$.
The same applies to $L_2(h)$ (by considering the solutions of $\gamma(X(y),y)=0$), and using the functional equation~\eqref{functional-eq-BM-1} and the fact that $(c_\pm)^{\pi/\theta}=-c^{\pi/\theta}$, one must have
\begin{equation*}
L_2(h)(y)= -\frac{\sigma_{11}}{\sigma_{22}} P\left(-\frac{1}{c^{\pi/\theta} x^{\pi/\theta}}\right),
\end{equation*}
with the  same $P$ as in \eqref{eq:L1-P}. Hence, using again the functional equation~\eqref{functional-eq-BM-1}, we deduce that  the Laplace transform of $h$ writes
\begin{equation}
\label{eq:LFP}
     L(h)(x,y) = \frac12 \sigma_{11} \frac{P\left(\frac{1}{y^{\pi/\theta}}\right)-P\left(-\frac{1}{c^{\pi/\theta} x^{\pi/\theta}}\right)}{\gamma(x,y)}.
\end{equation}
In particular, taking $P$ to be a polynomial of degree $1$, one gets 
\begin{equation*}
L(h)(x,y) =   \frac{ \sigma_{22}\frac{\mu_2}{x^{\pi/\theta}} +  \sigma_{11} \frac{\mu_1}{y^{\pi/\theta}}}{\gamma(x,y)} ,
\end{equation*}
where the constants are related by $\mu_2 = \mu_1 ( \frac{\sigma_{22}}{\sigma_{11}})^{1-\pi/2\theta}$. 
Taking the inverse Laplace transform, one should recover the unique positive harmonic function (written in polar coordinates $(\rho,\eta)$)
\begin{equation*}
h(x,y) = \rho^{\frac{\pi}{\theta}} \sin \left(  \frac{\pi}{\theta}   \eta  \right).
\end{equation*}

Suppose now that $v$ is bi-harmonic and satisfies
$
     \mathcal G v  = h
$,
where $h$ is harmonic. The functional equation for $v$ now reads
\begin{equation}
\label{functional-eq-BM-2}
\gamma(x,y) L(v)(x,y) = \frac12 ( \sigma_{11} L_1 (v)(y) + \sigma_{22} L_2 (v) (x)   ) + L(h)(x,y).
\end{equation}
By considering the roots of the kernel $\gamma$ and using the same method as above, we obtain
\begin{equation}
\label{eq:BVP-1-harmonic}
\frac12 \sigma_{11} L_1(v)(c_+ x) - \frac12 \sigma_{11} L_1(v) (c_- x) =  L(h)(x,c_- x ) - L(h)(x, c_+ x)  .
\end{equation}
We now have an \textit{a priori} non-homogeneous boundary  value problem for $v$, that we can in fact transform  into an homogeneous one, thanks to 
 the (already known) explicit form of $L(h)$. The key remark to this task is that $(c_+x)^{\pi/\theta} = (c_- x )^{\pi/\theta}=-(cx)^{\pi/\theta}$. Rewriting \eqref{eq:LFP} as
\begin{align*}
L(h)(x,y) 
& = \frac{\sigma_{11}}{\sigma_{22}} \frac{P\left(\frac{1}{y^{\pi/\theta}}\right)-P\left(\frac{1}{(c_\pm x)^{\pi/\theta}}\right)}{(y-c_- x)(y-c_+ x)}
\end{align*}
and letting $y\to c_+ x$ and $y\to c_- x$, one finds
\begin{equation*}
L(h)(x,c_\pm x) = \mp\frac{\sigma_{11}}{\sigma_{22}} \frac{\pi}{\theta}  \frac{1}{ (c_\pm x - c_\mp x)} 
P'\left(\frac{1}{(c_\pm x)^{\pi/\theta}}\right) \frac{1}{(c_\pm x)^{\pi/\theta+1}}.
\end{equation*}
Eventually, we get
\begin{align*}
& L(h)(x,c_- x ) - L(h)(x, c_+ x) \\
 & =
\frac{\sigma_{11}}{\sigma_{22}} \frac{\pi}{\theta} \left( 
 \frac{1}{ (c_+ x - c_- x)} \frac{P'\left(\frac{1}{(c_+ x)^{\pi/\theta}}\right)}{(c_+ x)^{\pi/\theta +1}}
 -
\frac{1}{(c_-  x- c_+ x)} \frac{P'\left(\frac{1}{(c_- x)^{\pi/\theta}}\right)}{(c_- x)^{\pi/\theta +1}} \right) \\
& = 
\frac{\sigma_{11}}{\sigma_{22}}  \frac{\pi}{\theta} \left( 
\frac{c_+}{c_+-c_-} P'\left(\frac{1}{(c_+ x)^{\pi/\theta}}\right) \frac{1}{(c_+ x)^{\pi/\theta +2} }
-
\frac{c_-}{c_- - c_+} P'\left(\frac{1}{(c_- x)^{\pi/\theta}}\right)\frac{1}{(c_- x)^{\pi/\theta +2} } \right) \\
& =
 - \frac{\sigma_{11}}{\sigma_{22}} \frac{\pi}{\theta} 
\frac{c_+c_-}{(c_+-c_-)^2} \left(  P'\left(\frac{1}{(c_+ x)^{\pi/\theta}}\right)   \frac{1}{(c_+ x)^{\pi/\theta +2} }
-
P'\left(\frac{1}{(c_- x)^{\pi/\theta}}\right) 
 \frac{1}{(c_- x)^{\pi/\theta +2} } \right),
\end{align*}
where the last equality follows from  $(c_+x)^{\pi/\theta} = (c_- x )^{\pi/\theta}$. Therefore, the boundary value equation~\eqref{eq:BVP-1-harmonic} is now homogeneous, and of the form
\begin{equation*}
\frac12 \sigma_{11} L_1(v)(c_+ x)-F(c_+ x ) = \frac12 \sigma_{11} L_1(v) (c_- x) -F(c_- x ),
\end{equation*}
where $F$ is equal on $Y_+([0,\infty])\cup Y_-([0,\infty])$ to 
\begin{equation}
\label{def:bigF}
F(y) =  -\frac{\sigma_{11}}{\sigma_{22}}  \frac{\pi}{\theta}  \frac{c_+c_-}{(c_+-c_-)^2} P'\left(\frac{1}{y^{\pi/\theta}}\right) \frac{1}{y^{\pi/\theta+2}} .
\end{equation}
We note that the simpler case when $F(c_+x)=F(c_- x)$ occurs exactly when $c_+^2=c_-^2$, i.e., $\theta$ is $0$ or $\pi/2$.
In this way, we obtain a boundary value problem analogous to the harmonic case, which, on the boundary of $\mathcal G_Y$ except at $0$, leads to 
\begin{equation*}
\frac12 \sigma_{11} L_1(v)(y)-F(y)=Q\left( \frac{1}{y^{\pi/\theta}} \right),
\end{equation*}
for any given polynomial $Q$.
The same computation applies  to $L_2(v)$. As such, using the equation~\eqref{functional-eq-BM-2}, the Laplace transform of the bi-harmonic function $v$ admits the following  form:
\begin{theorem}
\label{thm:Laplace-2}
For any polynomials $P$ and $Q$, the formula
\begin{equation*}
     L(v)(x,y) =  \frac{1}{\gamma(x,y)} \left[ Q\left( \frac{1}{y^{\pi/\theta}} \right) -Q \left( \frac{1}{(c_+x)^{\pi/\theta}} \right) + G(x,y) + L(h)(x,y) \right]
\end{equation*}
is the Laplace transform $L(v)$ of  a bi-harmonic function $v$ satisfying $\mathcal G v =h$, where $h$ is a harmonic function with Dirichlet boundary conditions, where the Laplace transform $L(h)$ of $h$ has the form \eqref{eq:LFP} and where
\begin{equation*}
G(x,y)= F(y)-F(c_+x)-L(h)(x,c_+x) ,
\end{equation*}
with $F$ defined in Eq.~\eqref{def:bigF}.
\end{theorem}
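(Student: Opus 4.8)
The plan is to carry the functional-equation scheme already set up for the harmonic case through to its conclusion for $v$, and then reconstruct $L(v)$ from the functional equation~\eqref{functional-eq-BM-2}. First I would record the payoff of the homogenization performed above: having rewritten the right-hand side of~\eqref{eq:BVP-1-harmonic} as $F(c_+x)-F(c_-x)$ with $F$ as in~\eqref{def:bigF}, the quantity $\frac12\sigma_{11}L_1(v)(y)-F(y)$ solves a \emph{homogeneous} boundary value problem on $\mathcal G_Y$ of exactly the type solved by $L_1(h)$ in the harmonic case. Applying the conformal map $\omega(x)=x^{-\pi/\theta}$ and the reasoning that led to~\eqref{eq:L1-P}, its general solution is $Q(1/y^{\pi/\theta})$ for an arbitrary polynomial $Q$, so that $\frac12\sigma_{11}L_1(v)(y)=F(y)+Q(1/y^{\pi/\theta})$.

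Next I would obtain the second unknown $L_2(v)$ cheaply, by evaluating~\eqref{functional-eq-BM-2} along a root of the kernel rather than redoing the boundary value analysis. Setting $y=c_+x$, so that $\gamma(x,c_+x)=0$, the functional equation collapses to $0=\frac12\sigma_{11}L_1(v)(c_+x)+\frac12\sigma_{22}L_2(v)(x)+L(h)(x,c_+x)$, whence $\frac12\sigma_{22}L_2(v)(x)=-Q(1/(c_+x)^{\pi/\theta})-F(c_+x)-L(h)(x,c_+x)$, using the previous display together with $(c_+)^{\pi/\theta}=-c^{\pi/\theta}$. Substituting both expressions for the boundary terms into~\eqref{functional-eq-BM-2} and dividing by $\gamma(x,y)$ then produces precisely the claimed formula, with $G(x,y)=F(y)-F(c_+x)-L(h)(x,c_+x)$.

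The step I expect to demand the most care is verifying that this expression genuinely defines a Laplace transform, i.e.\ that the prefactor $1/\gamma(x,y)$ introduces no spurious singularity: the bracketed numerator must vanish on the entire zero set $Y_\pm([0,\infty])$ of the kernel. At $y=c_+x$ this is immediate, since the $Q$-difference vanishes and $G(x,y)+L(h)(x,y)$ reduces to $-L(h)(x,c_+x)+L(h)(x,c_+x)=0$. At $y=c_-x$ one first uses $(c_-x)^{\pi/\theta}=(c_+x)^{\pi/\theta}$ to kill the $Q$-difference again, and the remaining cancellation $F(c_-x)-F(c_+x)+L(h)(x,c_-x)-L(h)(x,c_+x)=0$ is exactly the identity $F(c_+x)-F(c_-x)=L(h)(x,c_-x)-L(h)(x,c_+x)$ already established while homogenizing~\eqref{eq:BVP-1-harmonic}. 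This is the crux, and it is what makes the non-trivial case $c_+^2\neq c_-^2$ work.

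Finally, with analyticity across $\gamma=0$ secured, I would argue that inverting the Laplace transform yields a function $v$ with $\mathcal G v=h$ satisfying the Dirichlet boundary conditions, by running the derivation of~\eqref{functional-eq-BM-2} in reverse; the boundary conditions are encoded in the functional equation itself, since only the retained boundary terms $L_1(v),L_2(v)$ survive the integration by parts. I would stress that the construction yields a two-parameter family indexed by $(P,Q)$, where $P$ fixes the harmonic part $h$ through~\eqref{eq:LFP} and $Q$ parametrizes the purely bi-harmonic freedom, in the spirit of the Almansi decomposition~\eqref{eq:Almansi}.
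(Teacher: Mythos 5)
Your argument follows the paper's own derivation: homogenize the boundary value problem for $L_1(v)$ using $F$ from \eqref{def:bigF}, solve it as in the harmonic case to get $\frac12\sigma_{11}L_1(v)(y)=F(y)+Q\bigl(1/y^{\pi/\theta}\bigr)$, recover $L_2(v)$ from the functional equation evaluated on the zero set of the kernel, and substitute back into \eqref{functional-eq-BM-2}. Your extra verification that the bracketed numerator vanishes along $y=c_\pm x$ (reducing at $y=c_-x$ to the identity $L(h)(x,c_-x)-L(h)(x,c_+x)=F(c_+x)-F(c_-x)$ established during the homogenization) is a worthwhile consistency check that the paper leaves implicit, but it does not alter the route.
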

The above theorem can be understood as a Laplace transform counterpart of Almansi's theorem~\cite{Al-1899}.

Recursively, if $v_n$ is polyharmonic of order $n$ with $\mathcal G v_n= v_{n-1}$, where $v_{n-1}$ is polyharmonic of order $n-1$, the above method permits to express the Laplace transform of $v_n$ through the one of $v_{n-1}$, allowing to construct polyharmonic functions via Laplace inversion.

Further computations for the Brownian motion with identity covariance matrix are proposed in Appendix \ref{sec:app_BM}.

\section{Discrete polyharmonic functions}
\label{section-discrete}

Similarly to the continuous setting, we first investigate the appearance of polyharmonic functions in the asymptotic expansions of the counting coefficients of lattice paths with prescribed endpoints, starting from an exact expression for these coefficients (such exact expressions may typically be obtained from reflection principles). We then implement the functional equation approach to construct polyharmonic functions.

Our framework is thus the following. We consider random walks in the quarter plane $\mathbb Z^2_+$ with the following assumptions: 
\begin{enumerate}
\item The walk is homogeneous with transition probabilities $\{p_{i,j}\}_{-1\leq i,j \leq 1}$ to the eight nearest neighbours and $p_{0,0}=0$ (so we are only considering walks with small steps),
\item In the list $p_{1,1},p_{1,0},p_{1,-1},p_{0,-1},p_{-1,-1},p_{-1,0},p_{-1,1},p_{0,1}$, there are no three consecutive zeros (to avoid degenerate cases),
\item The drifts $\sum_{i,j}ip_{i,j}$ and $\sum_{i,j}jp_{i,j}$ are zero.
\end{enumerate}

The Markov operator $P$ of the walk is defined on discrete functions by
\begin{equation*}
Pf(x,y) = \sum_{-1\leq i,j \leq 1} p_{i,j} f(x+i,y+j),
\end{equation*}
and the Laplacian operator is $L=P-I$. A function $f$ is said to be \textit{harmonic} if $Lf=0$ and \textit{polyharmonic} of order $p$ if $L^pf=0$.  

\subsection{Examples of asymptotic expansion in walk enumeration problems}

We start by recalling a few exact expressions for the number of quarter plane walks of length $n$ with prescribed endpoints. 
\begin{example}[The diagonal walk]
The step set is $\{\nearrow,\nwarrow,\searrow,\swarrow  \}$, with uniform transition probabilities $\frac14$. It is well known (see for instance \cite{bou-02counting}) that
\begin{equation}
\label{counting-diagonal}
q((i,j),(0,0);n) = \frac{(i+1)(j+1)}{\frac{n+i+2}{2} \frac{n+j+2}{2}} \binom{n}{\frac{n+i}{2}} \binom{n}{\frac{n+j}{2}} ,
\end{equation}
with $i$ and $j$ having the same parity as $n$.
Starting from \eqref{counting-diagonal}, one can prove that 
\begin{equation}
\label{complete-asympt-diagonal}
 q((i,j),(0,0);n) \sim \frac{8}{\pi} 4^n  \sum_{p\geq0} \frac{v_p(i,j)}{n^{3+p}},
\end{equation}
where 
the first few terms in the above asymptotic expansion are given by
\begin{equation*}
\left\{\begin{array}{rcl}
v_0(i,j)& =& (i+1)(j+1) , \\
v_1(i,j) & =& -\frac12 (i+1)(j+1)(i^2+j^2+2i+2j+9) .
\end{array}\right.
\end{equation*}
The first term $v_0$ is the well-known unique (up to multiplicative constants) positive  harmonic function, with Dirichlet conditions; it is the same as for the simple walk, see \eqref{eq:asymp_SRW} and \eqref{eq:V3V4V5}. The next term satisfies $Lv_1=-3v_0$, and  therefore is  bi-harmonic. Note that in fact, using the explicit expression of the Laplacian $L$, it is obvious that any polynomial of degree at most $2p-1$ is polyharmonic of order $p$, since for any polynomial $f$ of degree $k$, $Lf$ has degree at most $k-2$ (it is a discrete equivalent of Lemma~\ref{lem:deg-2}).

To derive a full asymptotic expansion of \eqref{counting-diagonal}, we shall use the Laplace method applied to the counting coefficients rewritten as an integral, in the spirit of~\cite[p.~75--79]{spitzer} (alternatively one can apply the saddle-point method~\cite[Chap.\ B~VIII]{FlSe-09} in the framework of analytic combinatorics in several variables \cite{CoMeMiRa-17,MeWi-19}).  We choose to postpone it to Appendix \ref{sec:app}, since the computations are a bit long, though straightforward.
\end{example}

\begin{example}[The simple random walk]
\label{ex:SRW}
The step set is $\{\leftarrow, \uparrow, \rightarrow, \downarrow\}$, with uniform transition probabilities $\frac14$. We have \eqref{eq:SRW_excursion} by \cite{bou-02counting}.
Again, starting from \eqref{eq:SRW_excursion}, one can  prove that
\begin{equation*}
     q((i,j),(0,0);n) \sim \frac{4}{\pi}4^n\sum_{p\geq0} \frac{v_p(i,j)}{n^{3+p}},
\end{equation*}
where 
the first few terms in the asymptotic expansion are
\begin{equation}
\label{eq:V3V4V5}
     \left\{\begin{array}{rcl}
     v_0(i,j)&=&(i+1)(j+1),\\
     v_1(i,j)&=&-\frac{1}{4}(i+1)(j+1)(2i^2+2j^2+4i+4j+15). \\
     \end{array}\right.
\end{equation}
Again, $v_0$ is harmonic, and since  $Lv_1=-\frac{3}{2}v_0$, $v_1$ is  bi-harmonic.

\end{example}

\begin{example}[The tandem walk]
\label{ex:T}
The step set is $\{\nwarrow,\rightarrow,\downarrow\}$ with uniform transition probabilities $\frac13$. 
From \cite[Prop.~9]{BMMi-10}, we know that:
\begin{equation*}
     q((i,j),(0,0);n) = \frac{(i+1)(j+1)(i+j+2)(3m+2i+j)!}{m!(m+i+1)!(m+i+j+2)!},
\end{equation*}
with $n=3m+2i+j$. In this case, writing the asymptotic expansion
\begin{equation*}
q((i,j),(0,0);n)  \sim \frac{\sqrt 3}{2\pi} 3^n \sum_{p\geq0} \frac{v_p(i,j)}{n^{4+p}} ,
\end{equation*}
one has for the harmonic function $v_0$ and the bi-harmonic function $v_1$,
\begin{equation}
\label{eq:V3V4V5-tandem}
\left\{\begin{array}{rcl}
     v_0(i,j) & =& (i+1)(j+1)(i+j+2) , \\
     v_1(i,j)  & =&  -\frac19 (i+1)(j+1)(i+j+2) (3i^2  + 3j^2 +3ij+9i+9j+38) .
\end{array}\right.
\end{equation}
\end{example}

\subsection{Functional equation approach in the discrete case}

We implement here the functional equation method to construct polyharmonic functions. We start by recalling the key arguments in the harmonic case; details may be found in \cite{Ra-14}.

For a harmonic function $h$, we denote by $H$ its generating function, namely,
\begin{equation*}
H(x,y)= \sum_{i,j\geq0} h(i,j) x^i y^j. 
\end{equation*}
The \textit{kernel} of the random walk is defined as the polynomial
\begin{equation*}
     K(x,y)=xy\left(\sum_{-1\leq k, \ell\leq1 }p_{k,\ell}x^{-k}y^{-\ell}-1\right).
\end{equation*}
The harmonic equation $L h =0$ yields the following \textit{functional equation}
\begin{equation}
\label{eq:functional_equation_1-harmo}
     K(x,y)H(x,y)=K(x,0)H(x,0)+K(0,y)H(0,y)-K(0,0)H(0,0). 
\end{equation}

To solve \eqref{eq:functional_equation_1-harmo}, one first proves that the function $H(x,0)$ (and similarly $H(0,y)$) satisfies a \textit{boundary value problem} (see \cite{Ra-14}):
\begin{enumerate}
     \item $H(x,0)$ is analytic in $\mathcal G_X$,
     \item $H(x,0)$ is continuous  on $\overline{\mathcal G_X}\setminus \{1\}$,
     \item For all $x$ in the boundary of $\mathcal G_X$ except at $1$, $H(x,0)$ satisfies the boundary equation:
\begin{equation*}
     K(x,0)H(x,0)-K(\overline x ,0) H(\overline x ,0)=0.
\end{equation*}
\end{enumerate}
Here, $\mathcal G_X$ is a certain  domain bounded by the curve $X_+([y_1,1]) \cup X_-([y_1,1])$, where $X_\pm(y)$ are the branches of the algebraic function defined by $K(X(y),y)=0$.
Indeed, writing $K$ as 
\begin{equation*}
     K(x,y)=\widetilde\alpha(y) x^2 + \widetilde\beta(y) x + \widetilde\gamma(y),
\end{equation*}
where $\widetilde\alpha, \widetilde\beta,  \widetilde\gamma$ are polynomials of degree 2 whose coefficients depend on the model, we have
\begin{equation*}
     X_\pm (y) = \frac{-\widetilde\beta(y)\pm \sqrt{\widetilde\delta(y)}}{2\widetilde\alpha(y)},
\end{equation*}
where $\widetilde\delta(y) = \widetilde\beta(y)^2-4\widetilde\alpha(y)\widetilde\gamma(y)$. The functions $X_\pm$ are thus meromorphic on a cut plane, determined by the zeros of $\widetilde\delta$. 

It follows by \cite{Ra-14} that $K(x,0)H(x,0)$ may be written as a function of a certain conformal mapping $\omega$ (see \cite[Eq.~(3.1)]{Ra-14} for its explicit expression):
\begin{equation*}
     K(x,0)H(x,0) = P(\omega(x)),
\end{equation*}
where $P$ is an arbitrary entire function, for example a polynomial. This represents the analogous statement as \eqref{eq:L1-P} in the continuous setting. By the functional equation~\eqref{eq:functional_equation_1-harmo}, one eventually finds that
\begin{equation*}
     H(x,y) = \frac{P(\omega(x)) - P(\omega(X_+(x)))}{K(x,y)},
\end{equation*}
which again should be compared with \eqref{eq:LFP} in the continuous case.

For a bi-harmonic function $v$, satisfying $Lv=h$ with $h$ a harmonic function, the functional equation now writes
\begin{equation}
\label{eq:functional_equation_2-harmo}
     K(x,y)V(x,y)=K(x,0)V(x,0)+K(0,y)V(0,y)-K(0,0)V(0,0)-xyH(x,y),
\end{equation}
where $V$ is the  generating function of $v$,  i.e., $V(x,y)=\sum_{i,j\geq0} v(i,j) x^i y^j$; compare with \eqref{functional-eq-BM-2}. Notice that the equation \eqref{eq:functional_equation_2-harmo} is very close to functional equations coming up in walk enumeration problems.

Plugging the roots of the kernel into \eqref{eq:functional_equation_2-harmo}, one has
\begin{equation*}
     K(X_\pm(y),0)V(X_\pm(y),0)+K(0,y)V(0,y)-K(0,0)V(0,0)-X_\pm(y)yH(X_\pm(y),y)=0,
\end{equation*}
which leads to the boundary equation 
\begin{equation}
\label{eq:functional_equation_2-harmo_BVP}
     K(x,0)V(x,0)-K(\overline{x},0)V(\overline{x},0)= y\left( xH(x,y)-\overline x H(\overline x,y)    \right) , 
\end{equation}
for $x$ on the boundary of $\mathcal G_X$ (except at $1$). 

Note that a general method to solve this kind of boundary value problem \eqref{eq:functional_equation_2-harmo_BVP} exists \cite{FaIaMa-17}, for any quantity in the right-hand side, ending up in some contour integral expression for the unknown  function $K(x,0)V(x,0)$. We choose to provide below examples with simpler, integral-free expressions. Indeed, the resolution of \eqref{eq:functional_equation_2-harmo_BVP} is made easier in some peculiar cases, for instance when the right-hand side of  \eqref{eq:functional_equation_2-harmo_BVP} is zero (which occurs for the simple random walk, see Example~\ref{ex:SRW-funct-method} below), or when it can be decoupled in the terminology of \cite{BeBMRa-17} (which is analogous to the continuous setting and holds for the tandem walk, see Appendix~\ref{sec:app_tandem}).

\setcounter{example}{1}
\begin{example}[continued]
\label{ex:SRW-funct-method}
We consider here the case of the simple random walk,with kernel
\begin{equation*}
     K(x,y)=xy\left(\frac{1}{4}\left(x+\frac{1}{x}+y+\frac{1}{y}\right)-1\right).
\end{equation*}

The domain $\mathcal G_X$ is the open unit disk, and the conformal mapping $\omega$ admits the expression $\omega(x)=\frac{x}{(1-x)^2}$, see \cite{Ra-14}. A computation shows that $\omega(X_+(y))=-\omega(y)$, thus one gets that  the generating function of a harmonic function $h$ may be written as
\begin{equation*}
     H(x,y) = \frac{P(\omega(x))-P(-\omega(y))}{K(x,y)} . 
\end{equation*}
Choosing $P(x)=\frac{x}{4}$ leads to 
\begin{equation*}
     H(x,y)=\frac{\frac{\frac14x}{(1-x)^2}+\frac{\frac14y}{(1-y)^2}}{xy\left(\frac{1}{4}(x+\frac{1}{x}+y+\frac{1}{y})-1\right)}=\frac{1}{(1-x)^2(1-y)^2}=\sum_{i,j\geq0} (i+1)(j+1)x^{i}y^{j},
\end{equation*}
that is, $H$ is the generating function  of the unique positive harmonic function, see \eqref{eq:V3V4V5}.

We now consider bi-harmonic functions. Using the explicit form of $H$, one sees that the right-hand side of Eq.~\eqref{eq:functional_equation_2-harmo_BVP} vanishes. Indeed, we have
\begin{multline*}
X_+(y) H(X_+(y),y ) - X_-(y) H(X_-(y),y ) \\
= X_+(y)\frac{P'(\omega(X_+(y))) \omega'(X_+(y))}{\widetilde\alpha(y) (X_+(y)-X_-(y))} 
-X_-(y)\frac{P'(\omega(X_-(y))) \omega'(X_-(y))}{\widetilde\alpha(y) (X_-(y)-X_+(y))} ,
\end{multline*}
which is equal to zero since $\omega(X_+(y))=\omega(X_-(y))$ and
\begin{equation*}
X_+(y)\frac{\omega'(X_+(y))}{X_+(y)-X_-(y)} 
-X_-(y)\frac{ \omega'(X_-(y))}{X_-(y)-X_+(y)}=0
\end{equation*}
by straightforward computations.
The boundary equation has thus exactly the same form as the one in the harmonic case, so we get that on the boundary of $\mathcal G_X$,
\begin{equation*}
K(x,0) V(x,0) = Q(\omega(x)),
\end{equation*}
for some polynomial $Q$. Using (twice) the functional equation~\eqref{eq:functional_equation_2-harmo}, the general form for the generating function of a bi-harmonic $v$ satisfying $Lv=h$, with $h$ harmonic, is thus
\begin{equation*}
V(x,y)=\frac{Q(\omega(x))-Q(-\omega(y))+X_+(y)yH(X_+(y),y)-xyH(x,y)}{K(x,y)},
\end{equation*}
with 
\begin{equation*}
H(x,y) = \frac{P(\omega(x))-P(-\omega(y))}{K(x,y)} \quad \text{and} \quad H(X_+(y),y)= \frac{P'(\omega(X_+(y))) \omega'(X_+(y))}{\widetilde\alpha(y) (X_+(y)-X_-(y))}.
\end{equation*}
For instance, taking $P(x)=x$ and $Q$ the zero polynomial leads to the bi-harmonic function (non symmetrical in $i$ and $j$)
\begin{equation*}
v(i,j)=(i+1)j(j+1)  (j+2).
\end{equation*}
Indeed, one has
\begin{equation*}
X_+(y)H(X_+(y),y)=-\frac{y}{(1-y)^4},
\end{equation*}
so the generating function $V$ writes
\begin{equation*}
V(x,y)=\frac{-4y}{(1-x)^2(1-y)^4},
\end{equation*}
which is easily inverted. 
On the other hand, taking $P(x)=x$ and $Q(x)=-2x^2-\frac{5}{2}x$, one obtains the bi-harmonic function
\begin{equation*}
v(i,j)= (i+1)(j+1)(2i^2+2j^2+4i+4j+15),
\end{equation*}
which is (up to a multiplicative constant) the bi-harmonic function $v_1$ appearing  in 
Eq.~\eqref{eq:V3V4V5}. Another example will be treated in Appendix \ref{sec:app_tandem}.
\end{example}



\newpage


\appendix

\section{Detailed computations for the standard Brownian motion in the quadrant}
\label{sec:app_BM}

Here we apply  the functional equation  approach to the case of the Brownian motion in the quarter plane with identity covariance matrix. The kernel $\gamma$ is equal to 
$
\gamma(x,y)=\frac12\left(x^2+y^2\right),
$
so $c_\pm = \pm i$ and $\theta=\frac{\pi}{2}$, see \eqref{eq:c_pm}. The functional equation~\eqref{functional-eq-BM-1} for $h$ harmonic is then
\begin{equation*}
(x^2+y^2) L(h)(x,y) = L_1(h)(y) + L_2(h)(x) ,
\end{equation*} which leads to
\begin{equation*}
L(h)(x,y) =  \frac{P\bigl(\frac{1}{y^2}\bigr) - P\bigl(-\frac{1}{x^2}\bigr)}{x^2+y^2}. 
\end{equation*}
In case when $P$ is the degree $1$ polynomial $P(x)=x$, one gets
$
L(h)(x,y)= \frac{1}{x^2y^2}
$
which is the Laplace transform of the well-known unique positive harmonic function within the quarter plane 
$
h(x,y) = xy .
$

More generally, the choice of  $P(x)=-(2j)!(-x)^j$ leads  to the Laplace transform (in Cartesian coordinates) of the harmonic function $f_{2j,j}$ defined in \eqref{not:f_mu_j}. Indeed, recall that
$f_{2j,j} (\rho,\theta) = \rho^{2j} \sin \left( 2j \theta \right)$,
which is written in Cartesian coordinates as follows. Recall that the Chebyshev polynomial $U_j$ of the second kind  is defined as
$
U_{j} (\cos \theta ) \sin \theta = \sin( j \theta)
$, $j\geq0$,
and admits the expression
\begin{equation*}
U_{j}(z)=z^{j} \sum_{k=0}^{\lfloor j/2 \rfloor}  \binom{j+1}{2k+1} (1-z^{-2})^k .
\end{equation*}
Hence, thanks to the explicit expression of $U_{2j-1}$, the harmonic function $f_{2j,j}$ can be written,
 in Cartesian coordinates $(x,y)=(\rho\cos\theta,\rho\sin\theta)$,
\begin{equation*}
f_{2j,j} (x,y) =  \sum_{k=0}^{j-1} (-1)^k \binom{2j}{2k+1} y^{2k+1} x^{2j-(2k+1)} .
\end{equation*}
The Laplace transform of $f_{2j,j}$ is now computed using $L(x^ny^k)=\frac{n!k!}{x^{n+1}y^{k+1}}$,  and one obtains
\begin{equation}
\label{eq:Laplace-f2j}
L(f_{2j,j}) (x,y) 
 = (2j)! \sum_{k=0}^{j-1}  (-1)^k \frac{1}{y^{2k+2} x^{2j-2k}} 
 = (2j)! \frac{ \bigl(\frac{1}{x^2}\bigr)^j  -  \bigl(-\frac{1}{y^2}\bigr)^j}{x^2+y^2} .
\end{equation}


For $v$ bi-harmonic, the functional equation~\eqref{functional-eq-BM-2} is
\begin{equation*}
(x^2+y^2) L(v)(x,y) = L_1(v)(y) + L_2(v)(x)  + 2L(h)(x,y) , 
\end{equation*}
and 
the general form of the Laplace transform of $v$ writes 
\begin{equation}
\label{eq:Laplace-g}
L(v)(x,y) =  \frac{Q\bigl(\frac{1}{y^{2}}\bigr)  - Q\bigl(-\frac{1}{x^{2}}\bigr) +\frac{2}{x^4}P'\bigl( -\frac{1}{x^2} \bigr)+  2\frac{P\bigl(\frac{1}{y^{2}}\bigr)  - P\bigl(-\frac{1}{x^{2}}\bigr)}{x^2+y^2}}{x^2+y^2}, 
\end{equation}
where $P$ and $Q$ are arbitrary polynomials. Choosing $P(x)$ equal to $x$
and $Q(x)$ of degree $2$, equal to  $x^2$, gives that
\begin{equation*}
L(v)(x,y) = \frac{x^2+y^2}{x^4y^4}= \frac{1}{x^2y^4} + \frac{1}{x^4y^2} , 
\end{equation*}
which is the Laplace transform of the function
$
v(x,y) = (x^2+y^2) xy
$,
which corresponds in polar coordinate $(\rho,\theta)$ to the bi-harmonic function
$
f_{4,2}(\rho,\theta)=\rho^{4} \sin{2\theta}
$
defined in \eqref{not:f_mu_j}.

More generally, choosing 
\begin{equation*}
     P(x)=(-1)^{j+1}(2j)!2(2j+1)x^j\quad \text{and}\quad Q(x)=(-1)^{j+1}(2j)!2(2j+1)jx^{j+1}
\end{equation*}     
leads to the bi-harmonic function $f_{2j+2,j}$. Indeed, since \sloppy 
$f_{2j+2,j}(x,y)=(x^2+y^2)f_{2j,j}(x,y)$, one has, from the usual properties of the Laplace transform, that
$
L(f_{2j+2,j}) = \Delta L(f_{2j,j})$. As such, by applying the Laplacian to the Laplace transform of $f_{2j,j}$ given in \eqref{eq:Laplace-f2j}, one obtains that
\begin{multline*}
  L(f_{2j+2,j})(x,y) =  \\
  \frac{(2j)!2(2j+1)}{x^2y^2(x^2+y^2)^2}  \left\{ (j+2) x^2y^2  \left( \Bigl(\frac{1}{x^2}\Bigr)^j - \Bigl(\frac{-1}{y^2}\Bigr)^j\right) 
 + j \left( y^4 \Bigl(\frac{1}{x^2}\Bigr)^j - x^4 \Bigl(\frac{-1}{y^2}\Bigr)^j  \right)  \right\} .
\end{multline*}
Now, plugging the above choice of $P$ and $Q$ in Eq.~\eqref{eq:Laplace-g} gives easily the  formula.

\section{Complete asymptotic expansion for the diagonal walk}
\label{sec:app}

As an explicit example, we provide a complete asymptotic expansion for the number \eqref{counting-diagonal} of $n$-excursions from the origin to $(i,j)$ for the diagonal walk with steps from $\{\nearrow,\nwarrow,\searrow,\swarrow\}$.
A straightforward way to obtain such an asymptotic expansion  is to apply the standard Laplace's method (see \cite[p.~755]{FlSe-09}) using an integral representation of  \eqref{counting-diagonal} (in~\cite[p.~75--79]{spitzer}, this is applied to obtain first order asymptotic estimates in lattice paths enumeration problems). This leads to  an explicit new family of polynomials $(v_p)_{p\geq 0}$ of increasing degree, where $v_p$  is  the polyharmonic function of order $p+1$ appearing in the expansion \eqref{complete-asympt-diagonal}, see Corollary~\ref{cor:complete-asymptotic}.

Let us first introduce the necessary notations.
Projecting the walk onto the coordinate axes, one gets two decoupled prefixes of Dyck paths. Hence  \eqref{counting-diagonal} is obtained by
a simple application of the reflection principle in the one-dimensional case, which gives that the number of non-negative paths from 0 to $\lambda$ with $n$ steps is given by
\begin{equation}
\label{def:counting-1D}
m(\lambda,n):=\binom{n}{\frac{n+\lambda}{2}} -  \binom{n}{\frac{n+\lambda+2}{2}} = \frac{\lambda+1}{\frac{n+\lambda+2}{2} } \binom{n}{\frac{n+\lambda}{2}} ,
\end{equation}
with $\lambda \equiv n\mod 2$.
Using the simple integral representation of the binomial coefficient
\begin{equation*}
\binom{n}{k} = \frac{1}{2\pi} \int_{-\pi}^{\pi} e^{-ikt} (1+e^{it})^n dt,
\end{equation*}
one readily obtains the following integral representation for $m(\lambda,n)$:
\begin{equation}
\label{eq:def_m}
m(\lambda,n) =  \frac{2}{\pi}  \int_{-\pi/2}^{\pi/2} 2^n (\cos y)^n \sin((\lambda+1)y) \sin(y) dy.
\end{equation}
Now define
the  sequence $(\alpha(m))_{m\geq1}$ as
\begin{equation}
\label{def:alphanumb}
\alpha(m)=  \frac{(4^m-1)\vert B_{2m}\vert2^{2m}}{2m(2m)!},
\end{equation}
where the $B_{2m}$'s are the Bernoulli numbers, which can be defined through the Riemann zeta function at even integers:
\begin{equation*}
\zeta(2m)=\frac{\vert B_{2m}\vert(2\pi)^{2m}}{2(2m)!}. 
\end{equation*} 
Define also, for $s\geq k \geq 0$,
\begin{equation}
\label{def:Bell}
B_{s,k}^\alpha  := B_{s,k} \left(\alpha(2), \ldots, \alpha(s-k+2) \right) ,
\end{equation}
the rational numbers obtained by evaluating the  partial ordinary Bell polynomial in the variables $\alpha(m+1)$. 
Recall that by definition, see for instance \cite{Co-12}, the partial ordinary Bell polynomials in the variables $(x_k)_{k\geq1}$ are the polynomials obtained by performing the formal double series expansion:
\begin{equation*}
\exp\Big( u \sum_{m\geq1} x_m t^m \Big) = \sum_{n\geq k \geq 0} B_{n,k}(x_1,\ldots,x_{n-k+1} ) t^n \frac{u^k}{k!} .
\end{equation*}
Note that the polynomial $B_{n,k}$ contains $p(n,k)$ monomials, where $p(n,k)$ stands for the number of partitions of $n$ into $k$ parts, see  \cite{Co-12} for details and for an explicit expression of these polynomials.
Finally, define for $p\geq k \geq0$,
\begin{equation}
\label{def:BB}
C^\alpha_{k,p}=\frac{1}{k!}\sum_{j=k}^p  \frac{(-1)^j}{(2p-2j+1)!} B_{j,k}^\alpha.
\end{equation}

We first give  a complete asymptotic expansion for prefixes of Dyck paths.
\begin{theorem}
\label{thm:poly-1D}
Let $m(\lambda,n)$ be the number of non-negative paths from $0$ to $\lambda\in \mathbb Z_+$ given by \eqref{def:counting-1D}. The following asymptotic expansion holds as $n\to\infty$:
\begin{equation*}
     m(\lambda,n) \sim 2 \sqrt 2 \frac{2^n }{\sqrt \pi} \frac{1}{n^{3/2}} 
\sum_{j\geq0} \frac{(-1)^j}{n^j} h_j(\lambda),
\end{equation*}
where  for $j\geq0$,
\begin{equation}
\label{def:poly-1D}
h_j(\lambda) 
 = \sum_{p=0}^j \sum_{k=0}^p \frac{ (-1)^k}{(2(j-p)+1)!} C^\alpha_{k,p} m_{2(k+j+1)} 
(\lambda+1)^{2(j-p)+1}  ,
\end{equation}
where $m_{2k}=\frac{(2k)!}{2^k k!}$ is the $2k$-th Gaussian moment and $C^\alpha_{k,p}$ is defined in~\eqref{def:BB}.
\end{theorem}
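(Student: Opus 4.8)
The plan is to run Laplace's method on the integral representation \eqref{eq:def_m} and push it to \emph{all} orders, the closed-form coefficients being produced by the partial ordinary Bell polynomials of \eqref{def:Bell}. First I would observe that the integrand in \eqref{eq:def_m} is even in $y$ and that, since $\cos y<1$ on $(0,\pi/2]$ with the integrand vanishing at $y=\pm\pi/2$, the whole mass concentrates at $y=0$ as $n\to\infty$; any contribution from $\lvert y\rvert\geq\delta$ is exponentially small and may be discarded. The driving identity is
\begin{equation*}
\log\cos y=-\sum_{m\geq1}\alpha(m)\,y^{2m},
\end{equation*}
obtained by integrating the classical tangent series $\tan y=\sum_{m\geq1}\frac{(-1)^{m-1}2^{2m}(2^{2m}-1)B_{2m}}{(2m)!}y^{2m-1}$ and using $(-1)^{m-1}B_{2m}=\lvert B_{2m}\rvert$; term-by-term comparison with \eqref{def:alphanumb} produces exactly the $\alpha(m)$, and in particular $\alpha(1)=\tfrac12$. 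Thus $2^n(\cos y)^n=2^n\exp\bigl(-n\sum_{m\geq1}\alpha(m)y^{2m}\bigr)$, which isolates a pure Gaussian weight plus explicit corrections.

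Next I would rescale $y=u/\sqrt n$, turning the leading factor into $e^{-u^2/2}$ and the remainder into $\exp\bigl(-\sum_{m\geq2}\alpha(m)u^{2m}/n^{m-1}\bigr)$, so that each correction carries a genuine negative power of $n$. The integrand then splits into three factors expanded independently: the two sines $\sin((\lambda+1)u/\sqrt n)$ and $\sin(u/\sqrt n)$, whose Taylor series yield the factors $(\lambda+1)^{2a+1}/(2a+1)!$ and $1/(2b+1)!$; and the correction exponential. For the latter I would write $\sum_{m\geq2}\alpha(m)u^{2m}/n^{m-1}=u^2 g_n(u)$ with $g_n(u)=\sum_{s\geq1}\alpha(s+1)(u^2/n)^s$, then use $e^{-u^2 g_n}=\sum_{k\geq0}\frac{(-u^2)^k}{k!}g_n^{\,k}$ together with the Bell identity $g_n^{\,k}=\sum_{\ell\geq k}B^\alpha_{\ell,k}(u^2/n)^\ell$ coming from \eqref{def:Bell}. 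This shows that the coefficient of $n^{-s}$ in the correction factor is $\sum_{k=0}^s\frac{(-1)^k}{k!}B^\alpha_{s,k}\,u^{2(k+s)}$.

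It then remains to collect powers of $1/n$ and reorganise. The total power is $n^{-3/2-j}$ with $j=s+a+b$ (including the $n^{-1/2}$ from $dy=du/\sqrt n$), and integrating each monomial $u^{2(k+j+1)}$ against $e^{-u^2/2}$ over $\mathbb R$ — the extension from $[-\pi\sqrt n/2,\pi\sqrt n/2]$ costing only an exponentially small error — produces the Gaussian moment $m_{2(k+j+1)}$, while the constant $\frac{2}{\pi}\sqrt{2\pi}=\frac{2\sqrt2}{\sqrt\pi}$ reproduces the announced prefactor. Setting $a=j-p$ and $b=p-s$ factors out $(\lambda+1)^{2(j-p)+1}/(2(j-p)+1)!$; exchanging the $s$- and $k$-summations, the inner sum $\frac{1}{k!}\sum_{s=k}^p\frac{(-1)^s}{(2p-2s+1)!}B^\alpha_{s,k}$ is recognised as exactly $C^\alpha_{k,p}$ from \eqref{def:BB}, and tracking signs ($(-1)^{a+b}=(-1)^{j-s}$ against the global $(-1)^j$, the leftover $(-1)^s$ absorbed into $C^\alpha_{k,p}$) leaves the stated $(-1)^k$, yielding \eqref{def:poly-1D}. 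The genuine obstacle here is the \emph{rigorous} justification of this termwise procedure to all orders: one must bound the remainder of the Laplace expansion uniformly and control the interchange of the infinite Bell-polynomial sum with the integral, for which a Watson-type argument (truncate each series at order $N$, estimate the tail of the correction exponential on $\lvert u\rvert\leq\delta\sqrt n$, and absorb the rest into the exponentially small tail) is the natural route; by contrast the identification of the coefficients with $C^\alpha_{k,p}$, while lengthy, is entirely mechanical once the convention in \eqref{def:Bell} is fixed.
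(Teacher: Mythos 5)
Your proposal is correct and follows essentially the same route as the paper's proof: Laplace's method on the integral representation \eqref{eq:def_m}, the rescaling $y\mapsto y/\sqrt n$, the expansion $\log\cos y=-\sum_{m\geq1}\alpha(m)y^{2m}$ feeding into the Bell-polynomial identity to organise the correction factor, Gaussian moments from completing the tails, and the reindexing that produces $C^\alpha_{k,p}$. The only cosmetic difference is that you derive the $\log\cos$ expansion by integrating the tangent series while the paper uses the Weierstrass product, and you are somewhat more explicit about the uniform remainder control (the paper handles this by truncating at $\kappa_n=n^{1/10}$).
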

Hence, the above theorem gives, in the one-dimensional case, an asymptotic expansion of the number of non-negative paths in terms of polyharmonic functions. Indeed, it is easily seen that the polynomial $h_j$ has degree $2j+1$, so is polyharmonic of order $j+1$ for the one-dimensional Laplacian $Lf(x)=\frac1 2(f(x+1)+f(x-1))-f(x)$.

Since the number of $n$-excursions for the diagonal walk is the product of two numbers of (decoupled) Dyck paths, one readily obtains the following corollary.

\begin{corollary}
\label{cor:complete-asymptotic}
Let $q(0,(i,j);n)$ be the number of diagonal paths with $n$ steps from the origin to $(i,j)$ and confined in the quadrant, given by \eqref{counting-diagonal}. Then 
\begin{equation*}
q(0,(i,j);n) \sim  \frac{8}{\pi} \frac{1}{n^3} 4^n \sum_{p\geq0} \frac{(-1)^p}{n^p} v_p(i,j),
\end{equation*}
where, with $h_k$ defined in \eqref{def:poly-1D},
\begin{equation*}
v_p(i,j) = \sum_{k=0}^p h_k(i) h_{p-k}(j).
\end{equation*}
\end{corollary}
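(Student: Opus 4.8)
The plan is to reduce the two-dimensional statement to the one-dimensional expansion of Theorem~\ref{thm:poly-1D} via the decoupling of the diagonal walk, and then to multiply the two resulting asymptotic series.

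First I would make the factorization precise. Projecting each diagonal step in $\{\nearrow,\nwarrow,\searrow,\swarrow\}$ onto the horizontal and vertical axes produces, on each coordinate, an independent $\pm1$ step, so that an $n$-step quadrant excursion from the origin to $(i,j)$ is exactly a pair consisting of a non-negative $n$-step path from $0$ to $i$ and a non-negative $n$-step path from $0$ to $j$. Equivalently, comparing the closed forms \eqref{counting-diagonal} and \eqref{def:counting-1D} directly yields the identity
\begin{equation*}
q(0,(i,j);n) = m(i,n)\,m(j,n),
\end{equation*}
both sides being zero unless $i\equiv j\equiv n \bmod 2$.

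Next I would insert the one-dimensional expansion of Theorem~\ref{thm:poly-1D} into each factor. Writing
\begin{equation*}
m(\lambda,n)\sim 2\sqrt2\,\frac{2^n}{\sqrt\pi}\,\frac{1}{n^{3/2}}\sum_{a\geq0}\frac{(-1)^a}{n^a}h_a(\lambda),
\end{equation*}
the product of the two prefactors equals $(2\sqrt2)^2\pi^{-1}=8/\pi$ times $4^n n^{-3}$, which is exactly the common prefactor $\frac{8}{\pi}\frac{1}{n^3}4^n$ in the corollary. The remaining task is the Cauchy product of the two formal series in $1/n$: grouping terms of equal order $p$,
\begin{equation*}
\Bigl(\sum_{a\geq0}\tfrac{(-1)^a}{n^a}h_a(i)\Bigr)\Bigl(\sum_{b\geq0}\tfrac{(-1)^b}{n^b}h_b(j)\Bigr)=\sum_{p\geq0}\frac{1}{n^p}\sum_{k=0}^p(-1)^p h_k(i)h_{p-k}(j)=\sum_{p\geq0}\frac{(-1)^p}{n^p}v_p(i,j),
\end{equation*}
since $(-1)^a(-1)^b=(-1)^{a+b}=(-1)^p$ whenever $a+b=p$, which reproduces the stated $v_p(i,j)=\sum_{k=0}^p h_k(i)h_{p-k}(j)$.

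The only point requiring care, and the step I would treat most carefully, is the legitimacy of multiplying the two asymptotic expansions termwise: one must invoke the standard fact that if $f(n)\sim\sum_p a_p n^{-p}$ and $g(n)\sim\sum_p b_p n^{-p}$ as $n\to\infty$, then $f(n)g(n)\sim\sum_p\bigl(\sum_{k=0}^p a_k b_{p-k}\bigr)n^{-p}$. This follows by truncating each expansion at order $N$ with its $O(n^{-(N+1)})$ remainder and controlling the finitely many cross terms, and it applies directly here because $i$ and $j$ are held fixed while $n\to\infty$, so that the coefficients $h_a(i),h_b(j)$ are constants. No genuine obstacle arises beyond this bookkeeping; the substantive analytic work has already been carried out in Theorem~\ref{thm:poly-1D}.
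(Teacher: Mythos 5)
Your proof is correct and follows exactly the route the paper takes: the paper likewise observes that $q(0,(i,j);n)=m(i,n)\,m(j,n)$ by the decoupling of the diagonal walk into two prefixes of Dyck paths and then multiplies the two expansions from Theorem~\ref{thm:poly-1D}. Your write-up merely makes explicit the prefactor bookkeeping and the standard Cauchy-product lemma for asymptotic series, which the paper leaves implicit.
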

Clearly, the polynomial function $v_p$ has degree $2p+1$ and thus is polyharmonic of order $p+1$ for the Laplacian associated to the diagonal walk. The set of exponents \eqref{eq:set_exponents} appearing in the asymptotic expansion is here $3+\mathbb N$.

\begin{proof}[Proof of Theorem~\ref{thm:poly-1D}]
To obtain the claimed asymptotic expansion, we apply the Laplace method as in \cite[p.~755]{FlSe-09} to the integral representation of $m(\lambda,n)$ in \eqref{eq:def_m}.
Indeed, the cosine function admits only one maximum in the interval $[-\frac{\pi}{2},\frac{\pi}{2}]$, at $y=0$, and the contribution to the integral outside any fixed segment containing $0$ is exponentially small and as such can be discarded for an asymptotic consideration.

So, first, we perform the change of variable $\theta=\frac{y}{\sqrt n}$ to get
\begin{equation*}
 m(\lambda,n) 
 = 2^n \frac{ 2}{\pi}  \frac{1}{n^{1/2}} \int_{-\frac{\pi}{2}\sqrt n}^{\frac{\pi}{2}\sqrt n} \cos \left( \frac{y}{\sqrt n} \right)^n \sin  \left( \frac{y}{\sqrt n} \right)  \sin  \left( (\lambda+1)\frac{y}{\sqrt n} \right)dy  .
\end{equation*}
The next step is to consider an asymptotic expansion of the integrand as $n\to\infty$. Using the Weierstrass product formula for the cosine function,
\begin{equation*}
\cos  y  =\prod_{k=1}^\infty \left( 1-\frac{4y^2}{\pi^2(2k-1)^2}    \right) 
\end{equation*}
and the Taylor series of the logarithm function, one has
\begin{equation*}
\log \cos \left( y\right)  =  
- \sum_{m\geq1}\alpha(m) y^{2m} ,
\end{equation*}
where the sequence $(\alpha(m))_{m\geq1}$ is defined in \eqref{def:alphanumb}. Note that an interpretation of the sequence $(\alpha(m))_{m\geq1}$ is that they correspond to the cumulant sequence of the Bernoulli distribution $\frac12\delta_{+1} +\frac12\delta_{-1}$. 
Now one has, using $\alpha(1)=\frac12$ and the Taylor series of the exponential function,
\begin{equation*}
\cos \left( \frac{y}{\sqrt n} \right)^n 
 = \exp \left( n \log \cos \left( \frac{y}{\sqrt n} \right)  \right)
= e^{-y^2/2} \sum_{s\geq0} \frac{1}{n^s} \sum_{k=0}^s \frac{(-1)^k}{k!}  B_{s,k}^\alpha \ y^{2(k+s)} ,
\end{equation*}
where $B_{s,k}^\alpha$ is the partial ordinary Bell polynomial  defined in \eqref{def:Bell}. Now, using the Taylor series of the sine function, and after some elementary manipulations, one gets
\begin{multline*} 
 \cos \left( \frac{y}{\sqrt n} \right)^n \sin  \left( \frac{y}{\sqrt n} \right) \sin  \left( (\lambda+1)\frac{y}{\sqrt n} \right) \\
 = e^{-y^2/2}  \frac{1}{n} 
\sum_{j\geq0} \frac{(-1)^j}{n^j} \sum_{p=0}^j \sum_{k=0}^p (-1)^k \frac{C^\alpha_{k,p}}{(2(j-p)+1)!} y^{2(k+j)+2}  (\lambda+1)^{2(j-p)+1} ,
\end{multline*}
where $C^\alpha_{k,p}$ is defined in \eqref{def:BB}. 

The next step in the Laplace method is to neglect the tails. Hence, we write
\begin{equation*}
 m(\lambda,n) 
 \sim \frac{2 }{\pi}  \frac{2^n}{n^{3/2}} 
\sum_{j\geq 0} \frac{(-1)^j}{n^j} \sum_{p=0}^j \sum_{k=0}^p  \frac{(-1)^kC^\alpha_{k,p}}{(2(j-p)+1)!}  (\lambda+1)^{2(j-p)+1}
\int_{-\kappa_n}^{\kappa_n}  e^{-y^2/2} y^{2(k+j)+2} dy ,
\end{equation*}
where $\kappa_n$ is chosen so that the error bounds are exponentially small (for instance one can choose arbitrarily $\kappa_n=n^{1/10}$). Completing the tails of the Gaussian integral, that is
\begin{equation*}
\int_{-\kappa_n}^{\kappa_n}  e^{-y^2/2} y^{2(k+j)+2} dy \sim \int_{\mathbb R} e^{-y^2/2} y^{2(k+j)+2} dy 
 = \sqrt{2\pi}  \frac{(2(k+j+1))!}{2^{k+j+1}(k+j+1)!}  =\sqrt{2\pi}m_{2(k+j+1)},
\end{equation*}
where $m_{2k}=\frac{(2k)!}{2^k k!}$ is the $2k$-th Gaussian moment, one finally obtains, with $h_j$ defined in~\eqref{def:poly-1D}, that
 \begin{equation*}
m(\lambda,n) \sim 2 \sqrt 2 \frac{2^n }{\sqrt \pi} \frac{1}{n^{3/2}} 
\sum_{j\geq0} \frac{(-1)^j}{n^j} h_j(\lambda). \qedhere
\end{equation*}
\end{proof}

\section{The example of tandem walks}
\label{sec:app_tandem}

In this subsequent example, we consider the tandem walk with steps from $\{\nwarrow,\rightarrow,\downarrow\}$, see Example \ref{ex:T}. In this case,  the functional equation approach admits a nicer form because the right-hand side of 
Eq.~\eqref{eq:functional_equation_2-harmo_BVP} can be decoupled, that is, can be written as
$
G(X_+(y))-G(X_-(y)),
$
for some function $G$. The computations are close to the continuous case but are quite tedious.
 First, we know \cite{Ra-14} that the generating function $H$ of a harmonic function $h$ is of  the form
\begin{equation}
\label{eq:H-tandem}
H(x,y) = \frac{P(\omega(x))-P(\omega(X_+(y)))}{K(x,y)},
\end{equation}
where the conformal mapping $\omega$ is given by $\omega(x)=\frac{x^2}{(1-x)^3}$. The unique positive harmonic function $v_0(i,j)=\frac{1}{2}(i+1)(j+1)(i+j+2)$ of \eqref{eq:V3V4V5-tandem} is obtained  choosing $P(x)=\frac13 x$.

Using the general form of $H$, one has
\begin{multline*}
 yX_+(y)H(X_+(y),y)-yX_-(y)H(X_-(y),y)=  \\
 3\frac{yX_+(y)\omega'(X_+(y))}{X_+(y)-X_-(y)} P'(\omega(X_+(y))) 
- 
3\frac{yX_-(y)\omega'(X_-(y))}{X_-(y)-X_+(y)} P'(\omega(X_-(y))).
\end{multline*}
Define now the \textit{decoupling} function on $\mathcal G_X$:
\begin{equation}
\label{eq:Fdecoupled}
     F(x) =  -\frac{x^3}{(1-x)^6}.
\end{equation}
Some computations show that
\begin{equation*}
     \frac{yX_+(y)\omega'(X_+(y))}{X_+(y)-X_-(y)} 
     - 
     \frac{yX_-(y)\omega'(X_-(y))}{X_-(y)-X_+(y)}  = F(X_+(y))-F(X_-(y)).
\end{equation*}
A crucial point is to \textit{guess} the function $F$ in \eqref{eq:Fdecoupled} satisfying the above equation. Minding the fundamental fact that $\omega(X_+(y))=\omega(X_-(y))$, it follows that
\begin{equation*}
yX_+(y)H(X_+(y),y)-yX_-(y)H(X_-(y),y) = G(X_+(y))-G(X_-(y)),
\end{equation*}
where $G(x)=3F(x) P'(\omega(x))$. One deduces that the generating function $V(x,y)$ for a bi-harmonic function $v$ satisfying $Lv=h$ admits the form
\begin{equation*}
     \frac{1}{K(x,y)} \Big(  Q(\omega(x))-Q(\omega(X_+(y))) +G(x) - G(X_+(y))  
+ X_+(y)yH(X_+(y),y) - xy H(x,y)  \Big) ,
\end{equation*}
where $H$ has the general form given by Eq.~\eqref{eq:H-tandem} and $G(x)=3F(x) P'(\omega(x))$ with the decoupling function $F$ defined in Eq.~\eqref{eq:Fdecoupled}. Note that this has to be compared with Theorem~\ref{thm:Laplace-2}. 

Choosing $P(x)=x$ and $Q=0$ leads to the bi-harmonic function
\begin{equation}
\label{eq:biharmoQ0-tandem}
v(i,j)= (j+1)(i+1)(i+j+2)(2i^3+3i^2j+14i^2+5ij+24i-3ij^2-2j^3-4j^2+6j).
\end{equation}
To obtain to bi-harmonic function $v_1$ of \eqref{eq:V3V4V5-tandem}, one chooses  $P(x)=-\frac89 x$ and $Q(x)=\frac{8}{3}x^2+\frac{76}{27}x$. This is obtained by noticing that an appropriate linear combination 
of the bi-harmonic function \eqref{eq:biharmoQ0-tandem} and of $v_1$ is harmonic and its generating function corresponds to the term
\begin{equation*}
     \frac{Q(\omega(x))-Q(\omega(X_+(y)))}{K(x,y)}.
\end{equation*}
As such, computing its generating function leads to the polynomial $Q$.

\end{document}